\def\?[#1]{\textbf{[#1]}\marginpar{\Large{\textbf{??}}}}
\newtheorem{prop}{Proposition}[section]
\newtheorem{defi}[prop]{Definition}
\newtheorem*{theorem*}{Theorem}
\newtheorem{theorem}[prop]{Theorem}
\newtheorem{rem}[prop]{Remark}
\numberwithin{equation}{section}
\newcommand{\vp}{\varphi}
\newcommand{\R}{\mathbb{R}}
\newcommand{\la}{\langle}
\newcommand{\ra}{\rangle}
\newcommand{\C}{\mathbb{C}}
\renewcommand{\phi}{\varphi}
\renewcommand{\Re}[1]{{\,\mathfrak{Re}} \left ( #1\right ) }
\renewcommand{\Im}[1]{{\,\mathfrak{Im}} \left ( #1 \right ) }
\newcommand{\eps}{\varepsilon}
\DeclareMathOperator{\spn}{span}
\begin{document}

\title[A characterization of complex stable phase retrieval]{A characterization of complex stable phase retrieval in Banach lattices}

\author[Cam\'u\~nez]{Manuel Cam\'u\~nez}
\address{41720 Sevilla, Spain.}
\email{manuelcamunez@gmail.com}

\author[Garc\'ia-S\'anchez]{Enrique Garc\'ia-S\'anchez}
\address{Instituto de Ciencias Matem\'aticas (CSIC-UAM-UC3M-UCM)\\
Consejo Superior de Investigaciones Cient\'ificas\\
C/ Nicol\'as Cabrera, 13--15, Campus de Cantoblanco UAM\\
28049 Madrid, Spain.
\newline
	\href{https://orcid.org/0009-0000-0701-3363}{ORCID: \texttt{0009-0000-0701-3363} } }
\email{enrique.garcia@icmat.es}

\author[de Hevia]{David de Hevia}
\address{Instituto de Ciencias Matem\'aticas (CSIC-UAM-UC3M-UCM)\\
Consejo Superior de Investigaciones Cient\'ificas\\
C/ Nicol\'as Cabrera, 13--15, Campus de Cantoblanco UAM\\
28049 Madrid, Spain.
\newline
\href{https://orcid.org/0009-0003-5545-0789}{ORCID: \texttt{0009-0003-5545-0789}}}
\email{david.dehevia@icmat.es}

\keywords{Banach lattice; phase retrieval; stable phase retrieval.}

\subjclass[2020]{46B20, 46B42, 46E15}

\begin{abstract}
This note provides a characterization of the subspaces of a complex Banach lattice which do stable phase retrieval, in the spirit of the characterization of real stable phase retrieval established by D. Freeman, T. Oikhberg, B. Pineau and M. A. Taylor.
\end{abstract}


\maketitle

\section{Introduction}

Phase retrieval problems are of particular interest in applied harmonic analysis due to applications in multiple branches of physics and engineering, such as optics, astronomy, quantum mechanics, speech recognition, and, especially, diffraction imaging (see the surveys \cite{surveyGKR,surveyJEH} and the references therein). In all of these fields, the measuring process of a certain physical magnitude entails a certain loss of information that needs to be reconstructed. More specifically, we want to recover an unknown function from the measurements of a certain positive magnitude, such as its modulus or the modulus of its Fourier or Gabor transform. Note that there will always be a trivial ambiguity when retrieving the phase (or sign) of a function from its modulus, since a function $f$ and $\lambda f$ have the same modulus whenever $\lambda$ is a scalar of modulus one. Hence, the problem is to uniquely determine (up to a trivial ambiguity) the target function from the information given by these positive measurements and any other additional restriction that our target is known to satisfy \textit{a priori}. Moreover, the measurements can be influenced by the experimental procedure, that can introduce small errors. Therefore, it becomes necessary to study the stable version of the problem, that is, quantifying the robustness under small perturbations of the phase retrieval procedure.

These questions have been studied in physics and engineering since at least 1933, when W. Pauli asked whether a wave function can be recovered from its probability densities of position and momentum (equivalently, the modulus of the wave function and its Fourier transform) \cite{Pauli}. In \cite{BCE}, a purely mathematical approach was introduced to tackle phase retrieval problems in frame theory. This initiated an active study of phase recovery in many different settings, such as Fourier phase retrieval or Gabor phase retrieval. Since all these particular instances of the phase retrieval problem involve linear, order theoretic and metric notions, the authors of \cite{FOPT} (see also \cite{CPT,Taylor}) suggested the framework of Banach lattices, a natural generalization of function spaces, as the most convenient abstract setting for dealing with these kinds of questions. 

A (real) \textit{Banach lattice} is a Banach space over the field of real numbers endowed with a partial order that is compatible with the linear structure and admits lattice operations of supremum, infimum and absolute value, in such a way that the norm is compatible with the lattice structure, i.e., $\|x\|\leq \|y\|$ whenever $|x|\leq |y|$. We refer the reader to the classical textbooks on the topic \cite{AliprantisBurkinshaw, LindenstraussTzafririVol2, MeyerNieberg} for the precise definitions, basic notions and standard notation of the theory. 

Banach lattices are a generalization of the usual function spaces such as sequence spaces $\ell_p$, spaces of continuous functions $C(K)$, Lebesgue, Lorentz and Orlicz spaces of functions over measure spaces, etc. However, in a certain way, general Banach lattices are not far from being spaces of functions themselves. In fact, it is well known (see \cite[Lemma 3.4]{Lotz} and \cite{up} for more recent developments) that every Banach lattice can be isometrically embedded into a space of the form $(\bigoplus_{\mu\in \Gamma}L_1(\mu))_\infty$ (i.e., a direct sum of $L_1$-spaces for a certain set of measures $\Gamma$, endowed with the $\ell_\infty$-norm given by the supremum of the norms on each of the factors) in  a way that preserves its lattice structure (the linear operators that preserve the lattice operations are called \textit{lattice homomorphisms}). Moreover, every Banach lattice resembles locally a $C(K)$-space (a space of continuous functions over a compact Hausdorff space $K$): if $X$ is a Banach lattice and $e$ is a non-zero positive element, then $X_e=\{x\in X\::\: |x|\leq \lambda \, e\text{ for some } \lambda>0\}$, \textit{the principal ideal generated by $e$}, can be (lattice isometrically) identified with a $C(K)$-space by means of Kakutani's representation theorem for AM-spaces \cite[Theorem 2.1.3]{MeyerNieberg}.

This similarity with function spaces endows Banach lattices with an interesting feature, usually referred to as \textit{Krivine's functional calculus}. Given a (real) Banach lattice $X$ and some vectors $x_1,\ldots,x_n\in X$, this functional calculus enables us to define in $X$ any expression of the form $h(x_1,\ldots,x_n)$, where $h$ is a continuous and positively homogeneous function on $\R^n$, such as $(\sum_{i=1}^n |x_i|^p)^{\frac{1}{p}}$ for $1< p< \infty$, $|x_1|^{1-\theta}|x_2|^\theta$ for $0<\theta<1$, or in general some functions that cannot be obtained by just using a finite number of linear and lattice operations over the vectors $x_1,\ldots, x_n$. This correspondence allows us to immediately translate well-known equalities and inequalities of real numbers to Banach lattices. Note that, if the Banach lattice $X$ happens to be a space of functions with the usual pointwise or almost everywhere order, the corresponding functional calculus for vectors $f_1,\ldots,f_n\in X$ and a positively homogeneous expression $h:\R^n\rightarrow \R$ consists of the function given by $h(f_1,\ldots,f_n)(t)=h(f_1(t),\ldots,f_n(t))$ for every $t$ in the domain. We will eventually use this tool in order to prove \Cref{thm: complex stable phase retrieval} for general Banach lattices, but the proof can be perfectly followed without any additional knowledge of functional calculus arguments by  assuming that the ambient Banach lattice is just a $C(K)$ or $L_p(\mu)$ space. However, we refer the reader interested in the details of the construction to \cite[Chapter 16]{DiestelJarchowTongue}, \cite[Section 1.d]{LindenstraussTzafririVol2} or \cite[Theorem 2.1.20]{MeyerNieberg}.

Functional calculus is also involved in the definition of \emph{complex Banach lattices}. Let $X$ be a real Banach lattice. The complexification of $X$, denoted by $X_\C$, is the complex vector space $X\times X\equiv X \oplus iX$ endowed with the usual coordinatewise addition and the multiplication by complex scalars given by $(a+ib)(x_1+ix_2)=ax_1-bx_2+i(bx_1+ax_2)$ for $x_1,x_2\in X$, $a,b\in \R$. It is well known that the modulus of a complex-valued function $f=f_1+if_2$ is given by the expression
\[|f|=\sqrt{|f_1|^2+|f_2|^2}=\sup\bigl\{f_1\cos\theta+f_2\sin\theta\::\: \theta\in [0,2\pi]\bigr\}.\]
Analogously, we can use functional calculus to define the modulus of an element $x=x_1+ix_2\in X_\C$ as
\[|x|=\sqrt{|x_1|^2+|x_2|^2}.\]
Note that $|x|$ is an element of $X$, so we can define a norm in the whole complexification $X_\C$ by $\|x\|_{X_\C}=\||x|\|$, in such a way that $(X_\C,\|\cdot\|_{X_\C})$ is a complex Banach space. A \emph{complex Banach lattice} is defined to be the complexification $X_\C$ of a real Banach lattice $X$ endowed with the modulus and the norm defined in this way. Note that Banach spaces of complex-valued functions such as $C(K;\C)$ or $L_p(\mu;\C)$ are particular cases of complex Banach lattices, as they are the complexifications of their real-valued versions $C(K;\R)$ or $L_p(\mu;\R)$. \\

Going back to the main topic of the paper, let us introduce the precise definitions that we will be working with. Given a real or complex Banach lattice $X$ and a (not necessarily closed) subspace $E$ of $X$, we say that $E$ does \emph{phase retrieval (PR)} if for any $f,g\in E$ such that $|f|=|g|$, it follows that $f=\lambda g$ for some scalar $\lambda$ with $|\lambda|=1$. On the other hand, $E$ does \emph{stable phase retrieval with constant $C$} (denoted \emph{$C$-SPR}) if for any $f,g\in E$ the inequality
\begin{equation}\label{eq: SPR}
    \min_{|\lambda|=1}\|f-\lambda g\|\leq C \||f|-|g|\|
\end{equation}
holds. Eventually, we will simply say that $E$ does \emph{stable phase retrieval} (or SPR for short) if there is a constant $C\geq 1$ such that $E$ does stable phase retrieval with constant $C$. Let us define the equivalence relation $\sim$ on $E$ by $f\sim g$ if and only if $f=\lambda g$ for some scalar $\lambda$ of modulus 1 and the map 
\[\fullfunction{\Phi}{E/\sim}{X_+}{[f]}{|f|}.\]
Then, $E$ does PR if and only if the map $\Phi$ is injective. Further, if $E/\sim$ is endowed with the quotient metric, then $E$ does $C$-SPR if and only if the inverse of the map $\Phi$ is $C$-Lipschitz. Note that if $E$ does $C$-SPR, its closure does $C$-SPR too. It is also clear that SPR implies PR.

In the next section, we will review the problem of characterizing phase retrieval and stable phase retrieval in both real and complex Banach lattices, and prove the main result of the paper, \Cref{thm: complex stable phase retrieval}, that provides a \emph{characterization of complex stable phase retrieval}.

\section{Characterizations of stable phase retrieval in Banach lattices}\label{sect: characterization}

One of the main goals of the current research on (stable) phase retrieval is to find characterizations of the phase retrieval and stable phase retrieval conditions that may allow one to prove further results and develop algorithms that can be used in experimental applications. As we mentioned earlier, phase retrieval problems were originally introduced for frames on Hilbert spaces \cite{BCE}, where these questions have been intensively studied. Given a Hilbert space $H$, a \textit{frame} for $H$ is a collection $\Theta=(\vp_j)_{j\in J}\subseteq H$ (with $J$ not necessarily discrete) such that the analysis operator
\[\fullfunction{T_\Theta}{H}{\ell_2(J)}{x}{(\la x,\vp_j\ra)_{j\in J}}\]
is an isomorphic embedding. 

In this setting, a frame $\Theta$ is said to do \textit{phase retrieval (PR)} if whenever $|T_\Theta x|=|T_\Theta y|$ for $x,y\in H$, then $x=\lambda y$ for some unimodular scalar $\lambda$, or equivalently, if the subspace $T_\Theta(H)$ of the Banach lattice $\ell_2(J)$ does PR in the sense defined above. For real scalars, it was established in \cite{BCE} for finite dimensional Hilbert spaces, and later extended to infinite dimensional Hilbert spaces \cite{CCD} and reflexive Banach spaces \cite{AG}, that a frame does PR if and only if it satisfies the complement property. A frame $\Theta$ on a Hilbert spaces $H$ has the \textit{complement property} if for any subset $S\subseteq J$ either $\overline{\spn(\vp_j)_{j\in S}} =H$ or $\overline{\spn(\vp_j)_{j\notin S}} =H$ (see \cite[Definition 2.1]{AG} for the definition in general Banach spaces). In the complex setting, this condition is known to be only necessary \cite{AG,BCE, BCMN, CCD}, and it is asked in \cite[Remark 2.1]{CEHV} whether an analogous characterization exists. Stability of the phase recovery map associated to a frame has also been studied. Since the analysis operator is an embedding, frames doing \textit{stable phase retrieval (SPR)} can be defined as those frames such that the image of the corresponding analysis operator does SPR in $\ell_2(J)$. As was the case with PR, in the real case finite dimensional SPR was shown in \cite{BCMN} to be equivalent to the \textit{strong complement property}, a quantitative version of the complement property, and this characterization was later extended to the infinite dimensional setting \cite{AG}. Again, this condition is only necessary for complex scalars, as it is shown in \cite[Theorem 3.9]{AG}.  

The phase retrieval behavior of frames depends strongly on whether the Hilbert space is finite or infinite dimensional. Indeed, it was proven in \cite{BW, BCMN} (see also \cite{CCD} for the complex case) that in finite dimensions PR and SPR are equivalent. On the other hand, it was established in \cite{CCD}, and later refined in \cite{AG}, that no frame on a infinite dimensional (Hilbert or Banach) space does SPR. Therefore, infinite dimensional subspaces of a Banach lattice doing SPR, which have been recently constructed in \cite{CDFF, CPT, FOPT}, cannot be the range of the analysis operator of a frame.

In other settings, the existence of disjointly supported functions (or similar conditions involving products of functions, see \cite{CCS,CCSW} for some recent examples) seems to be the main obstruction for PR. Actually, it has been shown in \cite{FOPT} that the role that the (strong) complement property plays for frames corresponds in the setting of general Banach lattices to the property of not containing (almost) disjoint pairs: it characterizes real PR (respectively, SPR), and is a necessary condition in the complex case (see Propositions \ref{prop: Phase retrieval real case} and \ref{prop: adp implies no SPR in complex} and \Cref{thm: real stable phase retrieval} below). In this paper, we provide conditions that characterize complex PR and SPR (see \Cref{prop: Phase retrieval complex case} and \Cref{thm: complex stable phase retrieval}). Before stating them, we need to introduce some facts regarding the functional calculus that we recalled in the first section.\\

Let $X$ be a real Banach lattice. Recall that, given $f,g \in X$, Krivine's functional calculus allows us to define the expression $|fg|^{\frac{1}{2}}=|f|^{\frac{1}{2}}|g|^{\frac{1}{2}}$. When $X$ is a Banach lattice of functions with the usual pointwise or almost everywhere order, then $(|fg|^{\frac{1}{2}})(t)=|f(t)g(t)|^{\frac{1}{2}}$ for every $t$ in the domain. Moreover, the equality
\begin{equation}\label{eq: supremum times infimum}
    |fg|^{\frac{1}{2}}=(|f|\vee|g|)^{\frac{1}{2}}(|f|\wedge|g|)^{\frac{1}{2}}
\end{equation}
holds in $\R$, so it can be transferred to $X$.

Similarly, if $X$ is a complex Banach lattice, let $\hat{X}$ be the real Banach lattice such that $X=\hat{X}+i\hat{X}$ and let $f=f_1+if_2, g=g_1+ig_2\in X$ with $f_j, g_j\in \hat{X}$, $j=1,2$. We can define the conjugate of $g$ as $\overline{g}=g_1-ig_2$. Then, if $X$ was a Banach lattice of functions, the meaning of the expression $|\Re{f\overline{g}}|^{\frac{1}{2}}\in \hat{X}$ would be clear. Since $X$ is now a general complex Banach lattice, we need to define this expression in $\hat{X}$ by means of functional calculus as
\[|\Re{f\overline{g}}|^{\frac{1}{2}}=|f_1g_1+f_2g_2|^{\frac{1}{2}}=|\Re{f}\Re{g}+\Im{f}\Im{g}|^{\frac{1}{2}}.\]
Note that the equalities
\begin{equation}\label{eq: homogeneity of real part}
    \bigl|\Re{(\lambda f)\overline{(\mu g)}}\bigr|^{\frac{1}{2}}=|\lambda|^{\frac{1}{2}}|\mu|^{\frac{1}{2}}|\Re{f\overline{g}}|^{\frac{1}{2}}
\end{equation}
and
\begin{equation}\label{eq: real part and sum times difference}
    2|\Re{f\overline{g}}|^\frac{1}{2}=\left||f+g|^2-|f-g|^2\right|^\frac{1}{2}=\left||f+g|+|f-g| \right|^\frac{1}{2} \left||f+g|-|f-g| \right|^\frac{1}{2}
\end{equation}
are true in $\R$, provided $\lambda,\mu \in \R$. Therefore, they also hold in $\hat{X}$.\\

Now that we have extended the above expressions to the setting of Banach lattices, let us go back to the original question of characterizing when a subspace of a Banach lattice does phase retrieval. As we advanced earlier, in the real setting, the main and only obstruction to phase retrieval turns out to be the existence of disjoint pairs of elements in our subspace.

\begin{prop}[Real phase retrieval]\label{prop: Phase retrieval real case}
    Let $E$ be a subspace of a real Banach lattice $X$. The following are equivalent:
    \begin{enumerate}
        \item $E$ fails PR.
        \item There exist non-zero $f,g\in E$ such that $|f|\land |g|=0$.
        \item There exist non-zero $f,g\in E$ such that $|fg|^{\frac{1}{2}}=0$.
    \end{enumerate}
\end{prop}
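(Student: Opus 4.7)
The plan is to establish the cycle $(2)\Leftrightarrow(3)$, $(2)\Rightarrow(1)$, $(1)\Rightarrow(3)$, using the two identities recalled just before the statement and the way they transfer elementary $\mathbb{R}$-identities to the Banach lattice through Krivine's calculus.

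The equivalence $(2)\Leftrightarrow(3)$ follows directly from \eqref{eq: supremum times infimum}. If $|f|\wedge|g|=0$, then $|fg|^{1/2}=(|f|\vee|g|)^{1/2}\cdot 0=0$. Conversely, transferring the scalar inequality $a\leq\sqrt{ab}$ (valid whenever $0\leq a\leq b$) and using \eqref{eq: supremum times infimum},
\[|f|\wedge|g|\leq (|f|\vee|g|)^{1/2}(|f|\wedge|g|)^{1/2}=|fg|^{1/2},\]
so $|fg|^{1/2}=0$ forces $|f|\wedge|g|=0$.

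For $(2)\Rightarrow(1)$, pick non-zero $f,g\in E$ with $|f|\wedge|g|=0$. Decomposing $f=f^+-f^-$ and $g=g^+-g^-$, all four positive parts are pairwise disjoint, so $|f\pm g|=|f|+|g|$. In particular $f\pm g$ are non-zero and satisfy $|f+g|=|f-g|$, while $f+g\neq\pm(f-g)$ because both $f$ and $g$ are non-zero; thus $E$ fails PR.

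For $(1)\Rightarrow(3)$, take $f,g\in E$ with $|f|=|g|$ but $f\neq\pm g$, and set $h_1:=(f+g)/2$, $h_2:=(f-g)/2$, both non-zero. Then $|h_1+h_2|=|f|=|g|=|h_1-h_2|$, and transferring the real-number identity $4h_1h_2=(h_1+h_2)^2-(h_1-h_2)^2$ via Krivine's calculus (the real case of \eqref{eq: real part and sum times difference}) gives
\[2|h_1h_2|^{1/2}=\bigl||h_1+h_2|^2-|h_1-h_2|^2\bigr|^{1/2}=0,\]
which is precisely (3) for the pair $(h_1,h_2)$. The only delicate point is that every ``product'' and square root appearing above must be read through Krivine's functional calculus, since these are not pointwise operations in a general Banach lattice; however, that framework is in place from the preliminaries, so no extra technicality arises.
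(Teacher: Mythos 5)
Your proof is correct, but it follows a genuinely different route from the paper's in two of the three implications. For $(2)\Leftrightarrow(3)$ the paper passes to the principal ideal generated by $|f|+|g|$ and uses Kakutani's representation to reduce everything to disjointness of supports in $C(K)$, whereas you argue intrinsically from the identity \eqref{eq: supremum times infimum} together with the transferred scalar inequality $\min(|s|,|t|)\leq |st|^{1/2}$; your version is shorter and is in fact the same mechanism the paper later uses in the proof of \Cref{thm: real stable phase retrieval}, while the paper's $C(K)$ detour makes the ``disjoint supports'' picture explicit. For the failure of PR, the paper proves $(1)\Rightarrow(2)$ by a direct lattice computation showing $|f+g|\wedge|f-g|=\bigl||f|-|g|\bigr|$, whereas you prove $(1)\Rightarrow(3)$ by transferring the scalar identity $2|st|^{1/2}=|(s+t)^2-(s-t)^2|^{1/2}$ (the real analogue of \eqref{eq: real part and sum times difference}); both are legitimate, and yours has the advantage of foreshadowing the complex argument of \Cref{prop: Phase retrieval complex case} and \Cref{thm: complex stable phase retrieval}. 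Your $(2)\Rightarrow(1)$ coincides with the paper's. One small point deserves care: in $(2)\Rightarrow(3)$ you factor $|fg|^{1/2}=(|f|\vee|g|)^{1/2}(|f|\wedge|g|)^{1/2}$ and set the second factor to $0$, but $(|f|\wedge|g|)^{1/2}$ is not by itself a positively homogeneous degree-one expression, so in a general Banach lattice it is not a standalone element one can multiply by; the clean fixes are either to work inside the $C(K)$ representation of the principal ideal (as the paper does), or to use the norm estimate $\||fg|^{1/2}\|\leq \||f|\vee|g|\|^{1/2}\,\||f|\wedge|g|\|^{1/2}=0$ from \cite[Proposition 1.d.2(i)]{LindenstraussTzafririVol2}, which the paper invokes in \Cref{thm: real stable phase retrieval}. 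This is a presentational repair, not a gap in the argument.
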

\begin{proof}
 $(1)\Rightarrow (2)$ Suppose that $E$ fails PR. Then, there exist $f,g\in E$ with $|f|=|g|$ and $f\neq \pm g$. In particular, $f-g$ and $f+g$ are non-zero elements of $E$, and using some of the identities stated in \cite[Theorem 1.1.1]{MeyerNieberg} we deduce that
 \begin{align*}
     |f+g|\land |f-g|  = & \bigl((f+g)\lor (-f-g) \bigr) \land \bigl((f-g)\lor (-f+g) \bigr) \\
     =&\bigl((f+g)\land (f-g) \bigr)\lor \bigl((f+g)\land (-f+g) \bigr)  \\
     & \lor  \bigl((-f-g)\land (f-g) \bigr)\lor \bigl((-f-g)\land (-f+g) \bigr)  \\
     =& (f-|g|)\lor (-|f|+g) \lor (-|f|-g)\lor (-f-|g|) \\
     =&\bigl((f-|g|)\lor (-f-|g|) \bigr)\lor \bigl((-|f|+g) \lor (-|f|-g) \bigr) \\
     =& (|f|-|g|)\lor (-|f|+|g|)= \bigl||f|-|g| \bigr|=0.
 \end{align*}

 \noindent$(2)\Rightarrow(1)$ Let $f,g\in E$ be two disjoint vectors. Then, in particular, they are linearly independent and so are $f+g$ and $f-g$. Now taking into account that $f$ and $g$ are disjoint, by \cite[Theorem 1.1.1 (vi)]{MeyerNieberg},  $|f+g|=|f|+|g|=|f-g|$, so $E$ fails PR.\\

  \noindent$(2)\Leftrightarrow(3)$ Let $f,g\in E$, and consider the principal ideal generated by $h=|f|+|g|$ in $X$, $X_h$, which can be identified with some $C(K)\equiv C(K;\R)$ by means of a bijective lattice homomorphism $J:X_h\rightarrow C(K)$ \cite[Theorem 2.1.3]{MeyerNieberg}. On one hand, if $f$ and $g$ are disjoint in $X$, so are $Jf$ and $Jg$ in $C(K)$, where being disjoint means having disjoint supports. Hence, $Jf\cdot Jg=0$ and, in particular, $|Jf\cdot Jg|^\frac{1}{2}=0$. Since $J$ is bijective and the functional calculus is preserved by lattice homomorphisms, we recover that $|f g|^\frac{1}{2}=0$. Similarly, if $|f g|^\frac{1}{2}=0$, the uniqueness of the functional calculus in $C(K)$ implies that $|Jf\cdot Jg|^\frac{1}{2}=0$, so $Jf$ and $Jg$ must have disjoint supports and thus $f$ and $g$ are disjoint in $X$. 
\end{proof}

Recall that a frame $\Theta=(\varphi_j)_{j\in J}$ for a Hilbert space $H$ does PR if and only if the range of the analysis operator, $T_\Theta(H)$, does PR in $\ell_2(J)$, and by \cite{BCE, CCD} this was characterized in terms of the complement property. The connection between this property and $T_\Theta(H)$ not containing disjoint elements is clear: Indeed, if $\Theta$ fails the complement property, there exists a set $\varnothing\neq S\subsetneq J$ such that $F_1=\overline{\spn(\vp_j)_{j\in S}} \subsetneq H$ and $F_2=\overline{\spn(\vp_j)_{j\notin S}} \subsetneq H$, so we can find non-zero vectors $u\in F_1^\perp$ and $v\in F_2^\perp$ (here $\perp$ denotes the orthogonal complement of a subspace of $H$). It follows that $T_\Theta u$ and $T_\Theta v$ have disjoint support in $\ell_2(J)$, so $T_\Theta (H)$ contains disjoint pairs. Conversely, if $T_\Theta (H)$ contains a disjoint pair $T_\Theta u$ and $T_\Theta v$, we can consider the set $S=\text{supp}(T_\Theta u)$, so that $\text{supp}(T_\Theta v)\subseteq J\setminus S$, and therefore
\[v\in \intoo[1]{\overline{\spn(\vp_j)_{j\in S}}}^\perp \quad \text{and} \quad u\in \intoo[1]{\overline{\spn(\vp_j)_{j\notin S}}}^\perp ,\]
so $\Theta$ also fails the complement property.\\

As is the case with frames, studying complex PR for Banach lattices is slightly more involved, as disjointness is not the only obstruction for phase retrieval. Indeed, if a subspace $E$ of a complex Banach lattice $X=\hat{X}+i\hat{X}$ contains two real and linearly independent elements $f,g\in E\cap \hat{X}$, then $E$ automatically fails phase retrieval, as it contains the elements $f+ig$ and $f-ig$, which are linearly independent but have the same modulus. However, $E$ might not contain disjoint pairs (take for instance $E=\spn \{f,g\}$ with $f$ and $g$ non-disjoint satisfying the previous conditions). In view of this, it seems that it is not condition $(2)$ from \Cref{prop: Phase retrieval real case} but rather $(3)$ that should be used in order to get a generalization to the complex setting.

\begin{prop}[Complex phase retrieval]\label{prop: Phase retrieval complex case}
    Let $E$ be a subspace of a complex Banach lattice $X$. The following are equivalent:
    \begin{enumerate}
        \item $E$ fails PR.
        \item There exist linearly independent $f,g\in E$ such that $|f-g|=|f+g|$.
        \item There exist linearly independent $f,g\in E$ such that $|\Re{f\overline{g}}|^\frac{1}{2}=0$.
        \item There exist linearly independent $f,g\in E$ such that $|f+g|=(|f|^2+|g|^2)^\frac{1}{2}$.
    \end{enumerate}
\end{prop}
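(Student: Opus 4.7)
My plan is to run the implications $(1) \Leftrightarrow (2)$, $(2) \Leftrightarrow (3)$, and $(2) \Leftrightarrow (4)$ separately, exploiting the invertible change of variables $(f', g') \mapsto (f, g) := ((f'+g')/2, (f'-g')/2)$ for the first equivalence and reducing the remaining two to algebraic identities already at hand in the excerpt.

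For $(1) \Rightarrow (2)$, I would pick $f', g' \in E$ witnessing the failure of PR, so $|f'| = |g'|$ while $f' \ne \lambda g'$ for every unimodular $\lambda$. Such $f', g'$ are automatically linearly independent: any scalar relation $f' = \mu g'$ would force $|\mu| = 1$ by taking moduli, contradicting the choice. Setting $f := (f'+g')/2$ and $g := (f'-g')/2$ yields linearly independent elements of $E$ (the transformation is linear and invertible) with $f+g = f'$ and $f-g = g'$, hence $|f+g| = |f-g|$. The converse runs the change of variables backwards: from linearly independent $f, g \in E$ with $|f+g|=|f-g|$, the vectors $f+g$ and $f-g$ lie in $E$, are linearly independent, and have equal modulus, so PR fails.

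The equivalence $(2) \Leftrightarrow (3)$ is essentially the content of identity \eqref{eq: real part and sum times difference}, which reads $2|\Re{f\overline g}|^{1/2} = \bigl||f+g|^2 - |f-g|^2\bigr|^{1/2}$; since the positive square root built by Krivine's calculus is a bijection of $\hat X_+$, the left-hand side vanishes precisely when $|f+g|^2 = |f-g|^2$, i.e., when $(2)$ holds. For $(2) \Leftrightarrow (4)$, I would invoke the parallelogram identity $|f+g|^2 + |f-g|^2 = 2|f|^2 + 2|g|^2$. In terms of the real and imaginary parts $f_j, g_j \in \hat X$ this amounts to summing the elementary real identity $(x+y)^2 + (x-y)^2 = 2x^2 + 2y^2$ twice, so it transfers to $\hat X$ via the functional calculus. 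Granting it, $|f+g| = |f-g|$ is equivalent to $2|f+g|^2 = 2(|f|^2+|g|^2)$, that is to $|f+g|^2 = |f|^2 + |g|^2$, which is $(4)$.

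The only mild obstacle I anticipate is bookkeeping: confirming that each pointwise complex identity (the parallelogram law, and the factorisation behind \eqref{eq: real part and sum times difference}) lifts faithfully through Krivine's calculus applied to the four real components $f_1, f_2, g_1, g_2 \in \hat X$. This is the one place where the complex lattice structure intervenes beyond the pointwise intuition; once it is granted, the entire argument is elementary algebra and the change-of-variables observation for $(1) \Leftrightarrow (2)$.
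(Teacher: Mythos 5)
Your proposal is correct and follows essentially the same route as the paper: the invertible change of variables $(u,v)\mapsto\bigl(\tfrac{u+v}{2},\tfrac{u-v}{2}\bigr)$ for $(1)\Leftrightarrow(2)$, and the reduction of $(2)$, $(3)$, $(4)$ to the pointwise expansions $|f\pm g|^2=|f|^2+|g|^2\pm 2\Re{f\overline{g}}$ (your identity \eqref{eq: real part and sum times difference} and the parallelogram law are just these two expansions repackaged), transferred via Kakutani's representation of the principal ideal generated by $|f|+|g|$ and the uniqueness of the functional calculus. The ``bookkeeping'' you flag is precisely what the paper carries out by working in $C(K;\C)$, so there is no gap.
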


\begin{proof}
    $(1) \Leftrightarrow (2)$ $E$ fails PR if and only if there exist $u,v\in E$ linearly independent such that $\abs{u}=\abs{v}$. Considering the linearly independent combinations $f=\frac{u+v}{2}$ and $g=\frac{u-v}{2}$ we can write $u=f+g$ and $v=f-g$, so conditions $(1)$ and $(2)$ are equivalent.\\

    \noindent$(2) \Leftrightarrow (3) \Leftrightarrow (4)$ Assume that $X=\hat{X}+i\hat{X}$ and let $f,g\in E$, $h=|f|+|g|\in \hat{X}$ and $\hat{X}_h$ be the principal ideal generated by $h$. Then, by Kakutani's representation Theorem \cite[Theorem 2.1.3]{MeyerNieberg} we can identify $\hat{X}_h$ with some real $C(K;\R)$, as we did in \Cref{prop: Phase retrieval real case}, and $X_h=\hat{X}_h+i\hat{X}_h$ with $C(K;\C)$. Therefore, we can assume without loss of generality that $f,g\in C(K;\C)$. It is easy to check that
    \begin{equation*}
        \abs{f+g}^2=(f+g)(\bar{f}+\bar{g})=\abs{f}^2+\abs{g}^2+2\Re{f\overline{g}}
    \end{equation*}
    and
    \begin{equation*}
        \abs{f-g}^2=(f-g)(\bar{f}-\bar{g})=\abs{f}^2+\abs{g}^2-2\Re{f\overline{g}},
    \end{equation*}
    so $(2)$, $(3)$ and $(4)$ are clearly equivalent on $C(K;\C)$. The uniqueness of functional calculus allows us to bring this equivalence back to the complex Banach lattice $X$.
\end{proof}

To illustrate the intuition behind condition (3), let $a$ and $b$ be complex numbers such that $\mathfrak{Re}\,\bigl(a\overline{b}\bigr)=0$. This implies that $\arg(a)=\arg(b)\pm \frac{\pi}{2}$, that is, they are perpendicular when viewed as vectors in $\C=\R^2$. Similarly, if $f$ and $g$ are functions on a domain $\Omega$ with complex values, the condition $\Re{f\overline{g}}=0$ implies that at each point $\omega\in \Omega$, the values $f(\omega)$ and $g(\omega)$ are perpendicular in $\C=\R^2$. In analogy with this, the pairs $f,g$ in a Banach lattice $X$ satisfying $|\Re{f\overline{g}}|^\frac{1}{2}=0$ will be called \textit{perpendicular pairs}.

Note that phase retrieval is a purely algebraic property, so Propositions \ref{prop: Phase retrieval real case} and \ref{prop: Phase retrieval complex case} could be stated for vector lattices, as the norm does not play any role. In contrast, for stable phase retrieval the situation is not as straightforward. The authors of \cite{FOPT} provided a full characterization of real stable phase retrieval by changing in \Cref{prop: Phase retrieval real case} the notion of disjointness by that of almost disjointness. Given $\eps>0$, we say that $f,g\in S_E$ is a (normalized) \emph{$\eps$-almost disjoint pair} if $\||f|\wedge|g|\|\leq \eps$. Almost disjoint pairs were used to stablish a quantitative characterization of real SPR in \cite[Theorem 3.4 and Remark 3.5]{FOPT} (see the equivalence $(1)\Leftrightarrow (2)$ and the additional statement in the theorem below), which was later refined in \cite[Proposition 3.4]{Eugene}. It turns out that an equivalent condition can be obtained looking at products (in the functional calculus sense) instead of infima.

\begin{theorem}[Real stable phase retrieval] \label{thm: real stable phase retrieval}
    Let $E$ be a subspace of a real Banach lattice $X$. The following are equivalent:
    \begin{enumerate}
        \item $E$ fails SPR.
        \item For every $\eps>0$, $E$ contains an $\eps$-almost disjoint pair.
        \item For every $\eps>0$, there exist $f,g\in S_E$ such that $\||fg|^\frac{1}{2}\|<\eps$.
    \end{enumerate}
    Moreover, $E$ does $C$-SPR if and only if $E$ does not contain any $\frac{1}{C}$-almost disjoint pair.
\end{theorem}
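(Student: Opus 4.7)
The plan is to prove the quantitative ``moreover'' statement first, since $(1)\Leftrightarrow (2)$ is then its contrapositive after letting the constants tend to their extreme values; the equivalence $(2)\Leftrightarrow (3)$ will follow separately from the functional-calculus identity \eqref{eq: supremum times infimum} together with a Krivine-type multiplicative norm inequality.

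The algebraic key is the vector-lattice identity
\[\bigl||u|-|v|\bigr|=|u+v|\wedge|u-v|,\]
which is the pointwise identity for reals, transferred to the Banach lattice via Kakutani's representation of the principal ideal generated by $|u|+|v|$. For the ``only if'' direction of the moreover, fix $f,g\in S_E$ and substitute $u=f+g$, $v=f-g$. Then $\min_{\lambda=\pm 1}\|u-\lambda v\|=\min(\|2g\|,\|2f\|)=2$ and $\||u|-|v|\|=\|2|f|\wedge 2|g|\|=2\||f|\wedge|g|\|$, so the $C$-SPR inequality forces $\||f|\wedge|g|\|\geq 1/C$. For the ``if'' direction, given arbitrary nonzero $u,v\in E$, set $a=u+v$, $b=u-v\in E$; one has $\min_{\lambda=\pm 1}\|u-\lambda v\|=\min(\|a\|,\|b\|)$ and, by the identity, $\||u|-|v|\|=\||a|\wedge|b|\|$. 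Assume without loss of generality $\|b\|\leq\|a\|$ and normalize $\tilde a=a/\|a\|,\,\tilde b=b/\|b\|\in S_E$. The pointwise bound $|a|\wedge|b|\geq \|b\|\bigl(|\tilde a|\wedge|\tilde b|\bigr)$, immediate from $\|b\|\leq\|a\|$, yields
\[\||u|-|v|\|\geq \|b\|\,\||\tilde a|\wedge|\tilde b|\|\geq \|b\|/C\]
by hypothesis, which is precisely the $C$-SPR inequality for $u,v$.

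For $(2)\Leftrightarrow (3)$, the identity $|fg|^{1/2}=(|f|\vee|g|)^{1/2}(|f|\wedge|g|)^{1/2}$ from \eqref{eq: supremum times infimum} immediately gives $|fg|^{1/2}\geq|f|\wedge|g|$, hence $\||fg|^{1/2}\|\geq\||f|\wedge|g|\|$ and $(3)\Rightarrow (2)$. Conversely, Krivine's Cauchy--Schwarz inequality $\|h_1^{1/2}h_2^{1/2}\|\leq\|h_1\|^{1/2}\|h_2\|^{1/2}$ applied to the positive elements $h_1=|f|\vee|g|$ and $h_2=|f|\wedge|g|$ yields
\[\||fg|^{1/2}\|\leq\||f|\vee|g|\|^{1/2}\||f|\wedge|g|\|^{1/2}\leq\sqrt{2}\,\||f|\wedge|g|\|^{1/2}\]
on $S_E$ (using $\||f|\vee|g|\|\leq\||f|\|+\||g|\|=2$), giving $(2)\Rightarrow (3)$.

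The main obstacle is the ``if'' direction of the quantitative statement: the rescaling step $|a|\wedge|b|\geq \|b\|(|\tilde a|\wedge|\tilde b|)$, which promotes a uniform lower bound on normalized pairs into a $C$-Lipschitz estimate on the whole space, is the non-obvious ingredient. Everything else reduces to pointwise identities on reals plus the transfer via Kakutani's representation theorem.
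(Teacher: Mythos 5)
Your proof is correct, but for the core equivalence $(1)\Leftrightarrow(2)$ and the quantitative ``moreover'' statement it takes a genuinely different (and more self-contained) route than the paper, which simply cites \cite[Theorem 3.4 and Remark 3.5]{FOPT} and \cite[Proposition 3.4]{Eugene} for that part. Your key move is the change of variables $(u,v)\mapsto(a,b)=(u+v,u-v)$, under which $\min_{\lambda=\pm1}\|u-\lambda v\|=\min(\|a\|,\|b\|)$ and, by the lattice identity $\bigl||u|-|v|\bigr|=|u+v|\wedge|u-v|$ (exactly the identity the paper verifies inside the proof of \Cref{prop: Phase retrieval real case}), the $C$-SPR inequality becomes $\min(\|a\|,\|b\|)\leq C\,\||a|\wedge|b|\|$; the homogeneity estimate $|a|\wedge|b|\geq\min(\|a\|,\|b\|)\,(|\tilde a|\wedge|\tilde b|)$ then reduces everything to normalized pairs, yielding the sharp constant directly. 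This buys a short, citation-free proof of the quantitative statement at essentially no cost; what the paper's route buys is brevity and deference to the (already refined) literature. Your treatment of $(2)\Leftrightarrow(3)$ coincides with the paper's: the same use of \eqref{eq: supremum times infimum}, the pointwise bound $|f|\wedge|g|\leq|fg|^{1/2}$, and the multiplicative norm inequality $\|h_1^{1/2}h_2^{1/2}\|\leq\|h_1\|^{1/2}\|h_2\|^{1/2}$ from \cite[Proposition 1.d.2(i)]{LindenstraussTzafririVol2}. One pedantic caveat: in your ``only if'' direction you obtain $\||f|\wedge|g|\|\geq 1/C$ rather than a strict inequality, so with the paper's convention that an $\eps$-almost disjoint pair satisfies $\||f|\wedge|g|\|\leq\eps$ there is a possible boundary case at exactly $1/C$; this imprecision is already present in the theorem's statement itself and does not affect the equivalence of $(1)$, $(2)$ and $(3)$.
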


\begin{proof}
    $(1) \Leftrightarrow (2)$ As we already mentioned, the last part of the statement was established in \cite[Theorem 3.4 and Remark 3.5]{FOPT} and \cite[Proposition 3.4]{Eugene}. In particular, this yields the equivalence between $(1)$ and $(2)$. \\

    \noindent$(2) \Rightarrow (3)$ For every $\eps>0$, take $f,g\in S_E$ such that $\|\abs{f}\wedge\abs{g} \|<\eps$. By equation \eqref{eq: supremum times infimum} and \cite[Proposition 1.d.2(i)]{LindenstraussTzafririVol2} we get that
    \[\||fg|^\frac{1}{2}\|\leq \|\abs{f}\vee\abs{g}\|^\frac{1}{2}\|\abs{f}\wedge\abs{g}\|^\frac{1}{2} \leq\footnote{Here we are using that $\||f|\lor|g|\|\leq \|f\|+\|g\|\leq 2$. Note that if $X$ is an AM-space, i.e., a closed sublattice of a $C(K)$-space (see, e.g., \cite[Theorem 1.b.6]{LindenstraussTzafririVol2}), then the factor $\sqrt{2}$ can be removed in this inequality.} \sqrt{2\eps}.\]

    \noindent$(3) \Rightarrow (2)$ It is clear from equation \eqref{eq: supremum times infimum} that $|f|\wedge |g|\leq |fg|^\frac{1}{2}$, so 
    \[\||f|\wedge |g|\|\leq \||fg|^\frac{1}{2}\|\leq \eps. \qedhere\]
\end{proof}

Note that $(2)\Rightarrow (1)$ holds in the complex setting as well. More specifically, if a subspace $E$ of a complex Banach lattice contains a normalized $\eps$-almost disjoint pair, then it fails SPR with constant $\frac{1}{\sqrt{2}\eps}$. This is probably a known fact in the community, but the proof is not as straightforward as in the real case. For the sake of completeness, we reproduce in \Cref{prop: adp implies no SPR in complex} an argument provided by Jesús Illescas-Fiorito in his Master's thesis \cite[Remark II.3.3]{Illescas}. The proof is based on an ``orthogonal reduction'' argument that was first established in \cite[Theorem 1.1]{AAFG} for frames on Hilbert spaces. This property was later generalized to subspaces of Banach lattices in \cite{FOPT}. Let us recall the statement of this last version:

\begin{theorem}{\cite[Theorem 3.10]{FOPT}}\label{thm: orthogonal reduction}
    Let $(X,\|\cdot\|)$  be a (real or complex) Banach lattice. Fix linearly independent $f,g\in X$, and suppose that $Y=\spn\{f,g\}$ is equipped with a Hilbert space norm $\|\cdot\|_H$ which is $K$-equivalent to $\|\cdot\|$, with $1\leq K \leq \sqrt{2}$. Then there exist $f',g'\in Y$ so that
    \[\min_{|\lambda|=1}\|f-\lambda g\|\leq K \min_{|\lambda|=1}\|f'-\lambda g'\|,\]
    and
    \[\bigl(\|f'\|^2+\|g'\|^2\bigr)^\frac{1}{2}\leq K \min_{|\lambda|=1}\|f'-\lambda g'\|,\]
    and
    \[\left||f'|-|g'|\right|\leq \left||f|-|g|\right|.\]
\end{theorem}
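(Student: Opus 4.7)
The plan is to find $(f', g') \in Y$ that is $H$-orthogonal and then use the norm equivalence to transfer Hilbert-space estimates to $\|\cdot\|$-estimates. Rescaling $\|\cdot\|_H$ if necessary (which preserves the equivalence constant $K$), I may assume $\|x\| \leq \|x\|_H \leq K\|x\|$ for every $x\in Y$. The key feature of the Hilbert norm is the closed formula $\min_{|\lambda|=1}\|u-\lambda v\|_H^2 = \|u\|_H^2 + \|v\|_H^2 - 2|\ip{u}{v}_H|$, which when $\ip{u}{v}_H = 0$ reduces to $\|u\|_H^2 + \|v\|_H^2$. Consequently, once we exhibit $f', g' \in Y$ with $\ip{f'}{g'}_H = 0$, property $(2)$ is immediate:
\[
\bigl(\|f'\|^2 + \|g'\|^2\bigr)^{1/2} \leq \bigl(\|f'\|_H^2 + \|g'\|_H^2\bigr)^{1/2} = \min_{|\lambda|=1}\|f' - \lambda g'\|_H \leq K \min_{|\lambda|=1}\|f' - \lambda g'\|.
\]

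To build such a pair, let $\lambda_0$ be a unimodular scalar realizing $\min_{|\lambda|=1}\|f-\lambda g\|_H$ and set $h = \lambda_0 g$; then $|h| = |g|$, $\|f-h\|_H$ coincides with this minimum, and $\ip{f}{h}_H \geq 0$ is real. I would then look for $f', g'$ as a real rotation of $(f, h)$ in the plane $Y$:
\[
f' = \cos(\theta)\, f + \sin(\theta)\, h, \qquad g' = -\sin(\theta)\, f + \cos(\theta)\, h,
\]
with $\theta \in \R$ chosen so that $\ip{f'}{g'}_H = 0$; the resulting equation $\tan(2\theta) = 2\,\ip{f}{h}_H/(\|f\|_H^2 - \|h\|_H^2)$ always admits a real solution. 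This rotation is an $H$-isometry of $Y$, so $\|f'\|_H^2 + \|g'\|_H^2 = \|f\|_H^2 + \|h\|_H^2 \geq \|f-h\|_H^2$, and combining with the norm equivalence yields
\[
\min_{|\lambda|=1}\|f-\lambda g\| \leq \|f-h\| \leq \|f-h\|_H \leq K \min_{|\lambda|=1}\|f'-\lambda g'\|,
\]
which is property $(1)$.

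The main obstacle is property $(3)$. After invoking Kakutani's representation theorem on the principal ideal generated by $|f|+|g|$, one may assume that $X$ is a complex $C(\Omega)$-space, so moduli can be computed pointwise. Any real rotation preserves the pointwise sum of squared moduli: $|f'|^2 + |g'|^2 = |f|^2 + |h|^2 = |f|^2 + |g|^2$. The difficulty is to verify that the specific angle $\theta$ forced by the $H$-orthogonality also satisfies $\left||f'|-|g'|\right| \leq \left||f|-|h|\right| = \left||f|-|g|\right|$ pointwise. Expanding pointwise one has
\[
|f'|^2 - |g'|^2 = \cos(2\theta)\,(|f|^2 - |h|^2) + 2\sin(2\theta)\,\Re{f\,\overline{h}},
\]
so the cross term $2\sin(2\theta)\,\Re{f\,\overline{h}}$ must be controlled by $(|f|-|h|)(|f|+|h|)$ in absolute value. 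I expect the hypothesis $K \leq \sqrt{2}$ to play an essential role here: a larger $K$ would allow the Hilbert ellipsoid to be tilted significantly relative to the lattice norm, which would force $|\sin(2\theta)|$ to be large and thereby destroy the pointwise estimate.
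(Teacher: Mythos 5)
There is a genuine gap, and it is exactly where you flagged it: property $(3)$ is not merely unverified for your construction, it is false. The real rotation $f'=\cos(\theta)f+\sin(\theta)h$, $g'=-\sin(\theta)f+\cos(\theta)h$ leaves the cross term alive, $|f'|^2-|g'|^2=\cos(2\theta)\bigl(|f|^2-|h|^2\bigr)+2\sin(2\theta)\Re{f\overline{h}}$, and this cannot be dominated by $\bigl||f|-|h|\bigr|$ at a point where $|f|=|h|$ but $\Re{f\overline{h}}\neq 0$. Concretely, take $X=\ell_\infty^2$ over $\C$, $f=(1,1)$, $g=(1,i)$, and $\|\cdot\|_H$ the Euclidean norm, which is $\sqrt{2}$-equivalent to $\|\cdot\|_\infty$. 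Here $|f|=|g|$, so the right-hand side of $(3)$ is $0$; yet $h=\lambda_0 g$ has $\ip{f}{h}_H=\sqrt{2}>0$ and $\|f\|_H=\|h\|_H$, forcing $\theta=\pi/4$ and $\sin(2\theta)=1$, while $\Re{f_1\overline{h_1}}=1/\sqrt{2}\neq 0$, so $|f'|\neq|g'|$ in the first coordinate. This also shows that the hypothesis $K\leq\sqrt{2}$ cannot rescue the argument: the failure is a pointwise lattice phenomenon, independent of how the Hilbert ellipsoid sits relative to the lattice norm. (Your treatment of $(1)$ and $(2)$, by contrast, is fine: once an $H$-orthogonal pair is exhibited, both follow from the parallelogram identities and the normalization $\|\cdot\|\leq\|\cdot\|_H\leq K\|\cdot\|$.)

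The construction actually used in \cite[Theorem 3.10]{FOPT} --- and reproduced in this paper in the proof of \Cref{prop: adp implies no SPR in complex} --- achieves orthogonality by a different one-parameter family. After replacing $g$ by $\mu g$ with $|\mu|=1$ so that $\ip{f}{g}_H\geq 0$, one sets $f_t=f-t(f+g)$ and $g_t=g-t(f+g)$ for $t\in[0,\tfrac{1}{2}]$; since $\ip{f_0}{g_0}_H\geq 0$ and $\ip{f_{1/2}}{g_{1/2}}_H=-\tfrac{1}{4}\|f-g\|_H^2\leq 0$, the intermediate value theorem produces $R\in[0,\tfrac{1}{2}]$ with $\ip{f_R}{g_R}_H=0$. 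Writing $a=1-R\geq b=R\geq 0$, so that $f'=af-bg$ and $g'=ag-bf$, the cross terms now \emph{cancel} pointwise: $|f'|^2-|g'|^2=(a^2-b^2)\bigl(|f|^2-|g|^2\bigr)$, and since $(a-b)\bigl(|f|+|g|\bigr)=(a|f|-b|g|)+(a|g|-b|f|)\leq |f'|+|g'|$, one gets $\bigl||f'|-|g'|\bigr|\leq\bigl||f|-|g|\bigr|$ with no constraint on $K$ at all; the hypothesis $K\leq\sqrt{2}$ enters only in transferring the Hilbert-norm estimates $(1)$ and $(2)$ back to $\|\cdot\|$. The moral is that the admissible moves are those of the form $(f,g)\mapsto(af-bg,\,ag-bf)$ with $a\geq b\geq 0$, which preserve the difference $f-g$ and shrink the sum, rather than genuine rotations, which mix $f$ and $g$ with coefficients of the same sign and can create a modulus gap where none existed.
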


\Cref{thm: orthogonal reduction} establishes that SPR is witnessed on ``almost orthogonal'' vectors, meaning that if $f$ and $g$ satisfy the SPR inequality \eqref{eq: SPR}, then we can find $f'$ and $g'$ in $\spn\{f,g\}$ that tighten the inequality (up to a constant $K\leq \sqrt{2}$) and moreover $\min_{|\lambda|=1}\|f'-\lambda g'\|$ is bounded below by $K^{-1}(\|f'\|^2+\|g'\|^2)^\frac{1}{2}$. In particular, this allows us to assume that both terms in the SPR inequality are large. Moreover, in the proof of \cite[Theorem 3.10]{FOPT} we can find a method for obtaining such $f'$ and $g'$ from $f$ and $g$ that we will use to show:

\begin{prop}\label{prop: adp implies no SPR in complex}
    Let $E$ be a subspace of a complex Banach lattice $X$ and $\eps>0$. If there exist $u,v\in S_E$ such that $\||u|\wedge|v|\|<\eps$, then $E$ fails $\frac{1}{\sqrt{2}\eps}$-SPR.
\end{prop}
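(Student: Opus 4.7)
The plan is to exhibit an explicit pair $f',g' \in E$ witnessing failure of $\frac{1}{\sqrt 2\eps}$-SPR. If $\eps \geq 1$ the conclusion is automatic (any nontrivial subspace fails $C$-SPR for $C<1$, as can be checked on one-dimensional examples), so I may assume $\eps < 1$, in which case $u,v$ are necessarily linearly independent: otherwise $v = \lambda u$ with $|\lambda|=1$ would give $|u|=|v|$ and $\||u|\wedge|v|\|=1$, contradicting the hypothesis. The natural starting pair is $f := u+v$ and $g := u-v$, motivated by the real disjoint case in which $|f|=|g|$; these are likewise linearly independent. The strategy has two ingredients: control $\||f|-|g|\|$ from above, and then use the orthogonal reduction from \Cref{thm: orthogonal reduction} to correct for the phase cancellations one is concerned about in the complex setting.

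For the upper bound I would rely on the elementary pointwise inequality
\[\bigl||a+b|-|a-b|\bigr| \leq 2\min(|a|,|b|), \qquad a,b\in\C,\]
which follows from a one-line case analysis (both $|a+b|$ and $|a-b|$ lie in the interval $[\,\bigl||a|-|b|\bigr|,\,|a|+|b|\,]$). Via Kakutani's representation of the principal ideal generated by $|u|+|v|$ in $\hat X$ (as used in the proof of \Cref{prop: Phase retrieval real case}) and the uniqueness of Krivine's functional calculus, this lifts to $||u+v|-|u-v|| \leq 2(|u|\wedge|v|)$ in $\hat X$, whence
\[\||f|-|g|\| \leq 2\||u|\wedge|v|\| < 2\eps.\]

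Applying \Cref{thm: orthogonal reduction} to $(f,g)$ in $Y := \spn\{f,g\}$, with an admissible Hilbert norm yielding an equivalence constant $K \leq \sqrt 2$, produces $f',g' \in Y \subseteq E$ satisfying
\[\||f'|-|g'|\| \leq \||f|-|g|\| < 2\eps \quad\text{and}\quad \min_{|\lambda|=1}\|f'-\lambda g'\| \geq K^{-1}\bigl(\|f'\|^2+\|g'\|^2\bigr)^{1/2}.\]
The hard part will be establishing the matching lower bound $\min_{|\lambda|=1}\|f'-\lambda g'\| \geq \sqrt 2$, equivalently $(\|f'\|^2+\|g'\|^2)^{1/2} \geq K\sqrt 2$. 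The starting pair already satisfies $\|f\|+\|g\| \geq \max(\|f-g\|,\|f+g\|) = 2$, hence $\|f\|^2+\|g\|^2 \geq 2$; what remains is to verify that this baseline propagates through the specific construction of $f',g'$ carried out in the proof of \cite[Theorem 3.10]{FOPT}, under a suitable choice of Hilbert structure on $Y$. Granted this lower bound, one concludes
\[\frac{\||f'|-|g'|\|}{\min_{|\lambda|=1}\|f'-\lambda g'\|} < \frac{2\eps}{\sqrt 2} = \sqrt 2\,\eps,\]
yielding the desired failure of $\frac{1}{\sqrt 2\eps}$-SPR.
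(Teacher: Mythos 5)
Your reduction to $\eps<1$, the choice $f=u+v$, $g=u-v$, and the upper bound $\||f|-|g|\|\leq 2\,\||u|\wedge|v|\|<2\eps$ all match the paper's argument and are correct. The genuine gap is exactly where you flag it: the lower bound $\min_{|\lambda|=1}\|f'-\lambda g'\|\geq\sqrt2$. Moreover, the route you sketch for it cannot deliver the stated constant. You aim to show $(\|f'\|^2+\|g'\|^2)^{1/2}\geq K\sqrt2$, i.e.\ $\|f'\|^2+\|g'\|^2\geq 4$ when $K=\sqrt2$, by propagating the baseline $\|f\|^2+\|g\|^2\geq 2$ through the reduction. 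But the reduction only controls these quantities via the Hilbert norm: since $f'$ and $g'$ are $H$-orthogonal and $f'-g'=f-g$, one gets $\|f'\|_H^2+\|g'\|_H^2=\|f-g\|_H^2=4\|v\|_H^2\geq 4$, hence only $\|f'\|^2+\|g'\|^2\geq K^{-2}\cdot 4=2$, and nothing in the construction rules out near-equality here; your intermediate target $\geq 4$ is simply not available. Feeding the true bound $\geq 2$ into $\min_{|\lambda|=1}\|f'-\lambda g'\|\geq K^{-1}(\|f'\|^2+\|g'\|^2)^{1/2}$ yields only $\min_{|\lambda|=1}\|f'-\lambda g'\|\geq 1$, which proves failure of $\frac{1}{2\eps}$-SPR --- a factor $\sqrt2$ short of the claim.

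The paper closes this step by a different mechanism, which requires opening up the proof of \Cref{thm: orthogonal reduction} rather than invoking it as a black box. One first replaces $v$ by $\mu v$ with $\mu=\langle u,v\rangle/|\langle u,v\rangle|$, so that $\langle f,g\rangle=\|u\|_H^2-\|v\|_H^2\geq 0$; the reduction then takes the explicit form $f'=f-R(f+g)$, $g'=g-R(f+g)$ with $R\in[0,\tfrac12]$ \emph{real}, so that $f'-g'=f-g=2\mu v$ is preserved. Since $\langle f',g'\rangle=0$, the quantity $\|f'-\lambda g'\|_H$ is independent of the unimodular $\lambda$ and equals $\|f'-g'\|_H=2\|v\|_H\geq 2$, whence $\min_{|\lambda|=1}\|f'-\lambda g'\|\geq\frac{1}{\sqrt2}\cdot 2=\sqrt2$. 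It is this invariance of $f-g$ under the reduction combined with $H$-orthogonality --- not a lower bound on $\|f'\|^2+\|g'\|^2$ in the lattice norm --- that produces the constant $\frac{1}{\sqrt2\,\eps}$. To repair your proof you would need to import these two facts (the real parameter $R$ and $f'-g'=f-g$) from the proof of \cite[Theorem 3.10]{FOPT}, including the preliminary phase correction by $\mu$ that makes $R$ real.
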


\begin{proof}
    Given $u,v\in S_E$ such that $\||u|\wedge|v|\|<\eps$, fix a Hilbert norm $\|\cdot\|_H$ on $\spn\{u,v\}$ induced by a scalar product $\langle\cdot,\cdot \rangle$ such that $\|\cdot\|\leq \|\cdot\|_H\leq \sqrt{2}\|\cdot\|$,
    and assume without loss of generality that $\|u\|_H\geq \|v\|_H$. Let $\mu = \frac{\langle u,v \rangle}{|\langle u,v \rangle|}$ (if $\langle u,v \rangle=0$, take $\mu=1$ instead) and denote $f=u+\mu v$ and $g=u-\mu v$. Observe that $\langle f,g \rangle =\|u\|_H^2- \|v\|_H^2\geq 0$, which implies that
    \[\min_{|\lambda|=1}\|f-\lambda g\|_H=\|f-g\|_H.\]
    Reproducing the proof of \Cref{thm: orthogonal reduction}, there exists some $R\in [0,\frac{1}{2}]$ such that $f'=f-R(f+g)$ and $g'=g-R(f+g)$ satisfy that
    \[\left||f'|-|g'|\right|\leq \left||f|-|g|\right|\]
    and $\langle f',g' \rangle =0$, so
    \[\min_{|\lambda|=1}\|f'-\lambda g'\|_H=\|f'- g'\|_H.\]
    In particular, the fact that $f'-g'=f-g=2\mu v$ implies that 
    \[\min_{|\lambda|=1}\|f'-\lambda g'\|\geq \frac{1}{\sqrt{2}} \min_{|\lambda|=1}\|f'-\lambda g'\|_H\geq \frac{1}{\sqrt{2}} \|f'- g'\|_H =\frac{2}{\sqrt{2}}\|v\|_H \geq \frac{2}{\sqrt{2}}\|v\|= \frac{2}{\sqrt{2}}.\]
    On the other hand, observe that $|f+g|=2\,|u|$ and $|f-g|=2\,|v|$, so
    \[\left||f|-|g|\right|\leq |f+g|\land |f-g|\leq2\, |u|\wedge |v|.\]
    Putting everything together, we conclude that
    \[\||f'|-|g'|\|\leq \||f|-|g|\|< 2\eps \leq \sqrt{2} \eps \min_{|\lambda|=1}\|f'-\lambda g'\|,\]
    so $E$ fails $\frac{1}{\sqrt{2}\eps}$-SPR.
\end{proof}

Using again the orthogonal reduction argument provided in \Cref{thm: orthogonal reduction}, we can obtain the main result of the paper: a characterization of stable phase retrieval in the complex setting. In a similar way to what happens in the real case, where the restriction of not containing disjoint pairs of vectors in \Cref{prop: Phase retrieval real case} became the property of not containing almost disjoint pairs, in the complex, case condition (3) in \Cref{prop: Phase retrieval complex case} will also have a \textit{stable analogue}:

\begin{defi}\label{defi: almost perpendicular pairs}
    Let $X$ be a complex Banach lattice and $\eps>0$. We say that two elements $u,v\in X$ form a \textbf{(normalized) $\eps$-almost perpendicular pair} if $\|u\|=\|v\|=1$ and $\||\Re{u\overline{v}}|^\frac{1}{2} \|<\eps$.
\end{defi}

\Cref{thm: complex stable phase retrieval} states that a subspace of a complex Banach lattice fails SPR if and only if it contains uniformly separated $\eps$-almost perpendicular pairs for every $\eps>0$.

\begin{theorem}[Complex stable phase retrieval]\label{thm: complex stable phase retrieval}
    Let $E$ be a subspace of a complex Banach lattice $X$. The following assertions are equivalent:
    \begin{enumerate}
        \item $E$ fails SPR.
        \item For every $0<m<\frac{1}{\sqrt{2}}-\frac{1}{2}$ and $\eps>0$, there exist $u,v\in S_E$ such that $\min_{\abs{\lambda}=1} \| u-\lambda v\| \geq m$ and $\||\Re{u\overline{v}}|^\frac{1}{2} \|<\eps$.
        \item There exists $m>0$ satisfying that for every $\eps>0$, there exist $u,v\in S_E$ such that $\min_{\abs{\lambda}=1} \| u-\lambda v\| \geq m$ and $\||\Re{u\overline{v}}|^\frac{1}{2} \|<\eps$.
    \end{enumerate}
\end{theorem}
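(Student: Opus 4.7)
The plan is to prove the cycle $(2) \Rightarrow (3) \Rightarrow (1) \Rightarrow (2)$. The implication $(2) \Rightarrow (3)$ is trivial (pick any fixed $m \in (0, \frac{1}{\sqrt{2}} - \frac{1}{2})$ and reuse the same witness pair). The substantial implications are the stable analogues of the equivalence $(1) \Leftrightarrow (3)$ from \Cref{prop: Phase retrieval complex case}, and both rest on the pointwise identity $(|f|-|g|)(|f|+|g|) = |f|^2-|g|^2 = 4\Re{f\overline{g}}$, valid through Krivine's calculus.

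For $(3) \Rightarrow (1)$, I would start from the pair $u, v \in S_E$ supplied by (3) and produce the SPR-violating pair $f := u+v$, $g := u-v$. The displayed identity gives the pointwise bound $||f|-|g|| \leq 2|\Re{u\overline{v}}|^{1/2}$, so $\||f|-|g|\| < 2\eps$. For the lower bound on $\min_{|\lambda|=1}\|f-\lambda g\|$ I would parametrize $\lambda = e^{i\theta}$, factor
\[f - \lambda g = 2e^{i\theta/2}\bigl(-i\sin(\theta/2)\,u + \cos(\theta/2)\,v\bigr),\]
and reduce the task to bounding $\|-isu+cv\|$ uniformly below on the half-circle $s^2+c^2 = 1$, $s \geq 0$. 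The hypothesis applied at $\lambda \in \{\pm 1, \pm i\}$ together with the Lipschitz estimate $\|u+irv\| \geq \|u+iv\|-(1-r)$ for $r\in[0,1]$ (and its analogue on $[-1,0]$) gives $\|-isu+cv\| \geq m/(2\sqrt{2})$, hence $\min_{|\lambda|=1}\|f-\lambda g\| \geq m/\sqrt{2}$. The SPR constant of $(f, g)$ is then at least $m/(2\sqrt{2}\,\eps)$, which diverges as $\eps \to 0$, so $E$ fails SPR.

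For $(1) \Rightarrow (2)$, I would pick $f_n, g_n \in E$ normalized so that $\min_{|\lambda|=1}\|f_n-\lambda g_n\| = 1$ and $\||f_n|-|g_n|\| < 1/n$. Equipping $\spn\{f_n, g_n\}$ with a Hilbert norm coming from John's ellipsoid ($K \leq \sqrt{2}$) and applying \Cref{thm: orthogonal reduction} produces $f'_n, g'_n$ that are orthogonal in this Hilbert norm and satisfy the three conclusions listed there. After rescaling so $\min_{|\lambda|=1}\|f'_n-\lambda g'_n\| = 1$, we have $\|f'_n\|^2+\|g'_n\|^2 \leq 2$ and $\delta_n := \||f'_n|-|g'_n|\| \leq \sqrt{2}/n$. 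Writing $\alpha_n, \beta_n, a_n, b_n$ for $\|f'_n\|, \|g'_n\|, \|f'_n+g'_n\|, \|f'_n-g'_n\|$ (so $a_n, b_n \in [1, \alpha_n+\beta_n]$), I define
\[u_n := \frac{f'_n+g'_n}{a_n},\qquad v_n := \frac{f'_n-g'_n}{b_n}\in S_E.\]
The identity $\Re{(f'_n+g'_n)\overline{(f'_n-g'_n)}} = |f'_n|^2-|g'_n|^2$ together with the lattice Hölder inequality \cite[Proposition~1.d.2(i)]{LindenstraussTzafririVol2} yields $\||\Re{u_n\overline{v_n}}|^{1/2}\| \leq \sqrt{2\delta_n} \to 0$. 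For the lower bound I would expand $u_n - \lambda v_n = ((b_n-\lambda a_n)f'_n + (b_n+\lambda a_n)g'_n)/(a_n b_n)$; the Hilbert-orthogonality of $f'_n, g'_n$ diagonalizes the norm squared, and minimization over $|\lambda|=1$ gives
\[\min_{|\lambda|=1}\|u_n-\lambda v_n\| \geq \frac{(a_n+b_n)\min(\alpha_n,\beta_n)}{\sqrt{2}\,a_n b_n} \geq \frac{\sqrt{2}\min(\alpha_n,\beta_n)}{\alpha_n+\beta_n},\]
using $(a_n+b_n)/(a_n b_n) \geq 2/(\alpha_n+\beta_n)$ (AM--GM and $a_n, b_n \leq \alpha_n+\beta_n$). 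The reverse triangle inequality $|\alpha_n-\beta_n| \leq \delta_n$ and the constraint $\alpha_n+\beta_n \geq 1$ then give $\min(\alpha_n,\beta_n)/(\alpha_n+\beta_n) \geq 1/2 - O(\delta_n)$, so the lower bound exceeds any prescribed $m < 1/\sqrt{2}-1/2$ for $n$ large, proving (2).

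The main obstacle is the lower bound on $\min_{|\lambda|=1}\|u_n-\lambda v_n\|$ in $(1) \Rightarrow (2)$: the perpendicularity bookkeeping forces $u_n, v_n$ to be proportional to $f'_n \pm g'_n$ rather than to $f'_n, g'_n$ themselves, severing the direct link with the hypothesis $\min_{|\lambda|=1}\|f'_n-\lambda g'_n\| = 1$. The Hilbert-norm orthogonality supplied by \Cref{thm: orthogonal reduction} is essential for diagonalizing the numerator $(b_n-\lambda a_n)f'_n + (b_n+\lambda a_n)g'_n$ and reducing to scalar arithmetic in $\alpha_n, \beta_n, a_n, b_n$; the numerical threshold $1/\sqrt{2} - 1/2$ in (2) emerges as the limit $\sqrt{2}\min(\alpha_n,\beta_n)/(\alpha_n+\beta_n) \to 1/\sqrt{2}$ corrected by the first-order perturbation in $\delta_n$.
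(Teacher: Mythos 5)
Your proposal is correct and follows the same architecture as the paper's proof: the cycle $(1)\Rightarrow(2)\Rightarrow(3)\Rightarrow(1)$, the orthogonal reduction of \Cref{thm: orthogonal reduction} to produce an $H$-orthogonal pair $f',g'$ with $\||f'|-|g'|\|$ small, the passage to $u,v\propto f'\pm g'$ via the identity $\Re{(f'+g')\overline{(f'-g')}}=|f'|^2-|g'|^2$ and the lattice H\"older inequality, and, for $(3)\Rightarrow(1)$, the pair $f=u+v$, $g=u-v$ with a lower bound on $\inf\{\|\alpha u+\beta v\|:|\alpha|^2+|\beta|^2=1\}$ of order $m$ obtained by a case analysis (your trigonometric factorization of $f-\lambda g$ and the estimate $\max\bigl(1-\tfrac{c}{s},\,m-(1-\tfrac{c}{s})\bigr)\geq\tfrac m2$ reproduce, in slightly different clothing, the paper's three-case argument with the threshold $\delta$). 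The one genuine departure is your lower bound on $\min_{|\lambda|=1}\|u_n-\lambda v_n\|$ in $(1)\Rightarrow(2)$: the paper first bounds $\min_{|\lambda|=1}\|u'-\lambda v'\|$ for the unnormalized vectors $u'=\tfrac{f'+g'}{2}$, $v'=\tfrac{f'-g'}{2}$ and then pays a triangle-inequality penalty $|\|u'\|-\|v'\||$ for renormalizing, which is where the constant $\tfrac12$ is lost and the threshold $\tfrac{1}{\sqrt2}-\tfrac12$ originates; you instead expand $u_n-\lambda v_n$ directly in the $H$-orthogonal basis $f'_n,g'_n$, which diagonalizes the norm and yields $\min_{|\lambda|=1}\|u_n-\lambda v_n\|\geq\tfrac{\sqrt2\min(\alpha_n,\beta_n)}{\alpha_n+\beta_n}\geq\tfrac{1}{\sqrt2}-O(\delta_n)$. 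This is sharper than what the theorem asks for and shows that the threshold in condition $(2)$ could in fact be raised to any $m<\tfrac{1}{\sqrt2}$; your closing remark that the value $\tfrac{1}{\sqrt2}-\tfrac12$ "emerges" from your estimate is therefore not quite right --- your estimate overshoots it, and the $-\tfrac12$ is an artifact of the paper's cruder renormalization step rather than of your computation.
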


\begin{proof}
    $(1) \Rightarrow (2)$ Let us fix $0<m<\frac{1}{\sqrt{2}}-\frac{1}{2}$ and $\eps>0$. $E$ fails SPR, so given any $C\geq 1$ there exist linearly independent $f,g\in E$ such that 
    \begin{equation*}
        \min_{\abs{\lambda}=1} \| f-\lambda g\| > C \|\abs{f}-\abs{g}\|.
    \end{equation*}
    Following the proof of \Cref{thm: orthogonal reduction} we can find a pair of linearly independent vectors $f',g'\in Y= \spn \{f,g\}$ such that $\|g'\|\leq\| f'\|=1$,
    \begin{equation*}
        \min_{\abs{\lambda}=1} \| f'-\lambda g'\| > \frac{C}{\sqrt{2}} \norm[1]{\,\abs{f'}-\abs{g'}}
    \end{equation*}
    and 
    \begin{equation*}
        \min_{\abs{\lambda}=1} \| f'-\lambda g'\| \geq \frac{1}{\sqrt{2}}\left(\| f'\|^2+\| g'\|^2 \right)^{\frac{1}{2}}.
    \end{equation*}
    Moreover, there exists an Euclidean norm $\|\cdot \|_H$ on $Y$ such that $\|h\|\leq \|h\|_H\leq \sqrt{2}\|h\|$ for every $h\in Y$ and $\langle{f'},{g'}\rangle=0$. Note that, in particular,
    \begin{equation*}
        0 \leq \| f'\|-\| g'\| \leq \norm[1]{\,\abs{f'}-\abs{g'}} \leq \frac{\sqrt{2}}{C} \min_{\abs{\lambda}=1} \| f'-\lambda g'\| \leq \frac{2\sqrt{2}}{C}.
    \end{equation*}
    Let $0<\theta<1$ be a parameter depending only $m$ whose value will be established later in the proof, and assume that $C> \frac{2\sqrt{2}}{\theta}$. Then, 
    \begin{equation*}
        1=\| f'\|\geq \|g'\|\geq \| f'\|- \frac{2\sqrt{2}}{C}\geq 1-\theta
    \end{equation*}
    and
    \begin{equation*}
        \min_{\abs{\lambda}=1} \| f'-\lambda g'\| \geq \frac{1}{\sqrt{2}}\left(1+(1-\theta)^2 \right)^{\frac{1}{2}}.
    \end{equation*}
    Now, let us define $u'=\frac{f'+g'}{2}$ and $v'=\frac{f'-g'}{2}$. Clearly, they are linearly independent, since they generate $f'=u'+v'$ and $g'=u'-v'$. Moreover, by equation \eqref{eq: real part and sum times difference} and \cite[Proposition 1.d.2(i)]{LindenstraussTzafririVol2} it follows that
    \begin{equation}\label{eq: complex SPR key inequality}
        \norm[1]{|\mathfrak{Re}(u'\overline{v'})|^\frac{1}{2}}=\frac{1}{2}\bigl\|\left||f'|+|g'| \right|^\frac{1}{2} \left||f'|-|g'| \right|^\frac{1}{2}\bigr\| \leq \frac{1}{2} \||f'|+|g'| \|^\frac{1}{2} \||f'|-|g'| \|^\frac{1}{2}\leq  \frac{2^\frac{1}{4}}{\sqrt{C}}.
    \end{equation}
    On the other hand, for every unimodular $\lambda\in \C$ we have
    \begin{align*}
        2\|u'-\lambda v'\|^2 & \geq \|u'-\lambda v'\|_H^2  \\
        &= \frac{1}{4}\|(1-\lambda)f'+(1+\lambda) g'\|_H^2 \\
        &= \frac{1}{4} \left(\abs{1-\lambda}^2\|f'\|_H^2 + \abs{1+\lambda}^2\|g'\|_H^2 \right)\\
        & = \frac{1}{2} \left((1-\mathfrak{Re}{\lambda})\|f'\|_H^2 + (1+\mathfrak{Re}{\lambda})\|g'\|_H^2 \right)\\
        & \geq \min \{\|f'\|_H^2,\|g'\|_H^2\}\geq \min \{\|f'\|^2,\|g'\|^2\} = \|g'\|^2 \geq (1-\theta)^2,
    \end{align*}
    and thus
    \begin{equation*}
        \min_{\abs{\lambda}=1} \| u'-\lambda v'\| \geq \frac{1-\theta}{\sqrt{2}}.
    \end{equation*}
    Finally, note that
    \begin{equation*}
         \frac{1}{2\sqrt{2}}\left(1+(1-\theta)^2 \right)^{\frac{1}{2}} \leq  \frac{1}{2}\min_{\abs{\lambda}=1}\|f'-\lambda g'\|\leq \|u'\|\leq\frac{1}{2}(\|f'\|+\|g'\|)\leq 1,
    \end{equation*}
    and the same holds for $\|v'\|$. Define $u=\frac{u'}{\|u'\|}$ and $v=\frac{v'}{\|v'\|}$. Then, $u$ and $v$ are normalized vectors, and they satisfy that
    \begin{align*}
        \|u-\lambda v\|& \geq \frac{1}{\|u'\|}\left( \|u'-\lambda v'\|-\left\|\lambda\left(1- \frac{\|u'\|}{\|v'\|}\right)v'\right\| \right)\\
        & \geq \min_{\abs{\lambda}=1} \| u'-\lambda v'\| -\abs{\|u'\|-\|v'\|} \\
        & \geq \frac{1-\theta}{\sqrt{2}} + \frac{1}{2\sqrt{2}}\left(1+(1-\theta)^2 \right)^{\frac{1}{2}} -1
    \end{align*}
    for every unimodular scalar $\lambda$. Choosing $\theta\in (0,1)$ so that 
    \[m= \frac{1-\theta}{\sqrt{2}} + \frac{1}{2\sqrt{2}}\left(1+(1-\theta)^2 \right)^{\frac{1}{2}} -1\]
    (this is always possible, since $0<m<\frac{1}{\sqrt{2}}-\frac{1}{2}$), and making
    \begin{equation*}
        C>\max\left \{\frac{8\sqrt{2}}{(1+(1-\theta)^2)\eps^2}, \frac{2\sqrt{2}}{\theta}\right \},
    \end{equation*}
     we can use \eqref{eq: homogeneity of real part} and \eqref{eq: complex SPR key inequality} to conclude that 
    \begin{equation*}
        \||\Re{u\overline{v}}|^\frac{1}{2} \|= \frac{1}{\|u'\|^\frac{1}{2}\|v'\|^\frac{1}{2}}\||\Re{u'\overline{v'}} |^\frac{1}{2}\| \leq \left(\frac{8\sqrt{2}}{(1+(1-\theta)^2)C} \right)^\frac{1}{2}<\eps.
    \end{equation*}

    \noindent$(2) \Rightarrow (3)$ is trivial.\\

    \noindent$(3) \Rightarrow (1)$ We want to show that the SPR condition fails in $E$ for any constant $C>0$, that is, for every $C$ there is a pair of linearly independent elements $f,g\in E$ such that 
    \begin{equation*}
        \min_{\abs{\lambda}=1} \| f-\lambda g\| > C \|\abs{f}-\abs{g}\|.
    \end{equation*}
    Let us fix a constant $C>0$ and find $m>0$ satisfying the hypothesis. Then, we take $0<\eps< \frac{\delta m}{2C}$, where $\delta>0$ is a parameter depending only on $m$ that will be fixed later, and $u,v\in E$ as in the statement. Let $f=u+v$ and $g=u-v$, which are clearly linearly independent. It follows using functional calculus and \eqref{eq: real part and sum times difference} that
    \begin{equation}\label{eq: bound 2 implies 1}
        \left||f|-|g|\right|\leq \left||f|-|g|\right|^\frac{1}{2}\left||f|+|g|\right|^\frac{1}{2}= 2\abs{\Re{u\overline{v}}}^\frac{1}{2},
    \end{equation}
    so $\|\abs{f}-\abs{g}\|\leq 2 \||\Re{u\overline{v}}|^{\frac{1}{2}} \|\leq 2\eps$.

    Now, let us provide a lower bound for the expressions $\|f-\lambda g\|$, where $\lambda$ is a unimodular scalar. To do so, note that $\abs{1-\lambda}^2 + \abs{1+\lambda}^2=4$, so
    \begin{equation*}
        \|f-\lambda g\| = \|(1-\lambda)u+(1+\lambda) v\|  \geq 2 M,
    \end{equation*}
    where 
    \begin{equation*}
        M= \inf \left\{ \left\|\alpha u+ \beta v\right\|: \abs{\alpha}^2+\abs{\beta}^2=1 \right\}.
    \end{equation*}
    In order to bound $M$ from below, we select $\delta =(1+(1+\frac{m}{2})^2)^{-\frac{1}{2}}>0$, so that $\frac{\sqrt{1-\delta^2}}{\delta}=1+\frac{m}{2}$, and we distinguish three cases:
    \begin{itemize}
        \item If $\abs{\alpha}\geq \sqrt{1-\delta^2}$ (equivalently, $\abs{\beta}\leq \delta$), then
        \begin{equation*}
            \left\|\alpha u+ \beta v\right\|\geq \|\alpha u\|-\|\beta v\|= \abs{\alpha}-\abs{\beta}\geq \sqrt{1-\delta^2}-\delta = \frac{\delta m}{2}.
        \end{equation*}
        \item If $\abs{\alpha}\leq \delta$ (equivalently, $\abs{\beta}\geq \sqrt{1-\delta^2}$), then
        \begin{equation*}
            \left\|\alpha u+ \beta v\right\|\geq \|\beta v\|-\|\alpha u\|= \abs{\beta}-\abs{\alpha}\geq \sqrt{1-\delta^2}-\delta = \frac{\delta m}{2}.
        \end{equation*}
        \item If $\delta \leq\abs{\alpha}\leq\sqrt{1-\delta^2}$ (equivalently, $\delta \leq\abs{\beta}\leq\sqrt{1-\delta^2}$), then $\frac{\delta}{\sqrt{1-\delta^2}}\leq r=\frac{\abs{\beta}}{\abs{\alpha}}\leq \frac{\sqrt{1-\delta^2}}{\delta}$ and $\abs{r-1}\leq \frac{m}{2}$. Writing $\frac{\beta}{\alpha}=re^{i\varphi}$ we get
        \begin{equation*}
            \left\|\alpha u+ \beta v\right\| = \abs{\alpha}\|u+e^{i\varphi} v + (r-1)e^{i\varphi} v\| \geq \abs{\alpha}\left(\|u+e^{i\varphi} v\| - \abs{r-1} \right)\geq \frac{\delta m}{2}.
        \end{equation*}
    \end{itemize}
    We conclude that $M\geq \frac{\delta m}{2}$, so we have found a pair of linearly independent vectors $f$ and $g$ in $E$ such that
    \begin{equation*}
         C \|\abs{f}-\abs{g}\| \leq 2\eps C < \delta m \leq 2M\leq \min_{\abs{\lambda}=1} \| f-\lambda g\|
    \end{equation*}
    and the proof is concluded.
\end{proof}

\begin{rem}
    Note that the restriction $\min_{\abs{\lambda}=1} \| u-\lambda v\| \geq m >0$ cannot be dropped in conditions $(2)$ and $(3)$ of \Cref{thm: complex stable phase retrieval}. Indeed, there exists a two dimensional subspace of $\C^4$ that does stable phase retrieval, but for every $\eps>0$ it contains a pair of linearly independent vectors $u,v\in S_E$ such that $\||\Re{u\overline{v}}|^\frac{1}{2} \|<\eps$.
\end{rem}

\begin{proof}
    Let $u=(1,1,1,0), w=(1,i,0,1)\in \ell_\infty^4$ (over $\C$) and $E=\spn\{u,w\}$. Given $0<\delta<1$, let $v=A(iu+\delta w)=A(i+\delta, (1+\delta)i,i,\delta)$, where $A=(1+\delta)^{-1}$. It is clear that, for every $\delta$, $u$ and $v$ are normalized and linearly independent. Moreover, $|\Re{u\overline{v}}|^\frac{1}{2}=(\sqrt{A\delta},0,0,0)$, so for every $\eps>0$ we can choose $\delta$ such that $1-A=A\delta=\frac{\delta}{1+\delta}<\eps^2$, so that $\||\Re{u\overline{v}}|^\frac{1}{2} \|_\infty<\eps$. However, we can observe that
    \[\min_{\abs{\lambda}=1} \| u-\lambda v\|_\infty\leq \biggl\| u-\frac{i+1}{i-1} v\biggr\|_\infty=\frac{1}{\sqrt{2}}\bigl\|(i-1)(1-A)u-(i+1)A\delta w\bigr\|_\infty \leq 2A\delta<2\eps^2, \]
    so when $\eps$ is small the pair $u,v$ does not satisfy the separation restriction of \Cref{thm: complex stable phase retrieval}.
    
    Next, let us show that $E$ does SPR. To do so, it suffices to prove that $E$ does PR, since by \cite[Corollary 3.14]{FOPT} PR and SPR are equivalent for finite dimensional subspaces. By \Cref{prop: Phase retrieval complex case}, we need to check that for every pair of linearly independent vectors $f,g\in E$, the expression $|\Re{f\overline{g}}|^\frac{1}{2}$ is non-zero. Assume the contrary: there exist two linearly independent vectors $f=a_1u+b_1w,\: g=a_2u+b_2w\in E$ such that $|\Re{f\overline{g}}|^\frac{1}{2}=0$. Since $\ell_\infty^4$ is a space of functions, the functional calculus coincides with the usual functional calculus in $\R$ coordinatewise, so, in particular, we can write $\Re{f\overline{g}}=0$. A simple computation shows that
    \begin{equation*}
        \left\{\begin{aligned}
            0=  & (\Re{ a_1}+\Re{ b_1})(\Re{ a_2}+\Re{ b_2})+(\Im{ a_1}+\Im{ b_1})(\Im{ a_2}+\Im{ b_2}),\\
            0= & (\Re{ a_1}-\Im{ b_1})(\Re{ a_2}-\Im{ b_2})+(\Im{ a_1}+\Re{ b_1})(\Im{ a_2}+\Re{ b_2}),\\
            0= & \Re{ a_1}\Re{ a_2}+\Im{ a_1}\Im{ a_2},\\
            0=& \Re{ b_1}\Re{ b_2}+\Im{ b_1}\Im{ b_2}.
        \end{aligned}\right.
    \end{equation*}
    Without loss of generality, we can assume that $a_1=1$, so that the third equation implies that $\Re{a_2}=0$, i.e., $a_2=\alpha i$ for some $\alpha \in \R$. Substituting the information from the third and fourth equations in the other two equations we obtain
    \[ \left\{\begin{array}{ccc}
        0 & = & \alpha \Im{b_1}+\Re{b_2}, \\
        0 & = & \alpha \Re{b_1}-\Im{b_2}, 
    \end{array}  \right.\]
    so, in particular,
    \[a_1b_2-a_2b_1= \alpha \Im{b_1}+\Re{b_2} -i(\alpha\Re{b_1}-\Im{b_2})=0.\]
    However, by our assumption, $f$ and $g$ are linearly independent, so $a_1b_2-a_2b_1\neq 0$, which leads to a contradiction. Therefore, $\Re{f\overline{g}}\neq 0$, so $E$ does PR, and hence SPR.
\end{proof}

\section*{Acknowledgements}
The research of E.~Garc\'ia-S\'anchez and D.~de~Hevia is partially supported by the grants PID2020-116398GB-I00, CEX2019-000904-S and CEX2023-001347-S funded by the MICIU/AEI/10. 13039/501100011033. 
E.~Garc\'ia-S\'anchez is partially supported by the grant CEX2019-000904-S-21-3 funded by MICIU/AEI/10.13039/501100011033 and by ``ESF+''. D. de Hevia benefited from an FPU Grant FPU20/03334 from the Ministerio de Ciencia, Innovación y Universidades.  \\

This work was originated as part of the supervised research project ``Open problems in stable phase retrieval'' of the ICMAT-IMAG Doc-course in functional analysis that took place during the month of June of 2023 at ICMAT (Madrid) and IMAG (Granada). The authors want to acknowledge both institutions, as well as the organizers, for making this event possible.\\

The authors are especially grateful to Mitchell Taylor, the instructor of the course, for his tutoring and mentoring role throughout the whole project. The authors also want to thank Antonio Avilés, Eugene Bilokopytov, Jesús Illescas, Timur Oikhberg, Izak Oltman, Alberto Salguero, Tomasz Szczepanski and Pedro Tradacete for their valuable discussions and suggestions. Finally, the authors thank the anonymous referee for their kind suggestions.

\end{document}